\newtheorem{theorem}{Theorem}
\newtheorem{lemma}{Lemma}
\newtheorem{proposition}{Proposition}
\newtheorem{remark}{Remark}
\newtheorem{definition}{Definition}
\newtheorem{example}{Example}
\DeclareMathOperator{\diag}{diag}
\DeclareMathOperator{\sgn}{sgn}
\newcommand{\R}{\mathbb R}
\begin{document}

\title{On the global convergence of the inexact semi-smooth Newton method for absolute value equation\thanks{IME/UFG, Avenida Esperança, s/n Campus Samambaia,  Goi\^ania, GO, 74690-900, Brazil. This work was partially supported by CAPES-MES-CUBA 226/2012 and UNIVERSAL FAPEG/CNPq projects.}}

\author{
J.Y. Bello Cruz\thanks{This author was partially supported by CNPq Grants 303492/2013-9, 474160/2013-0 (e-mail:{\tt
yunier@ufg.br}).}
\and
O. P. Ferreira\thanks{This author was  partially supported by FAPEG, CNPq Grants 4471815/2012-8,  305158/2014-7  (e-mail:{\tt
orizon@ufg.br}).}
\and
L. F. Prudente\thanks{This author was  partially supported by  FAPEG  (e-mail:{\tt lfprudente@ufg.br}).  }  
 }

\maketitle

\begin{abstract}
\noindent In this paper,   we  investigate global convergence properties of the inexact nonsmooth Newton method for solving the system of absolute value equations (AVE). Global $Q$-linear convergence is established under suitable assumptions. Moreover, we present  some numerical experiments designed to investigate the practical viability of the proposed scheme.
 
\medskip
 
\noindent
{\bf Keywords:}  Absolute value equation,   inexact semi-smooth Newton method, global convergence,  numerical experiments.

\medskip

\noindent {\bf Mathematical Subject Classification (2010):} Primary 90C33;
Secondary 15A48.

\end{abstract}

\section{Introduction}
Recently, the problem of finding a solution of the system of absolute value equations (AVE)
\begin{equation} \label{eq:vae}
Ax-|x|=b, 
\end{equation}
where  \(A\in \R^{n \times n}\) and    $b\in \R^n \equiv \R^{n \times 1}$, has been   received much attention from optimization community.  It is currently  an active  research topic, due to its broad application to many subjects.  For instance, linear complementarity problem, linear programming or convex quadratic programming can be equivalently reformulated in  the form  of \eqref{eq:vae} and thus solved as absolute value equations; see \cite{Mangasarian2014, Prokopyev2009, Mangasarian2007, Rohn2004}.
As far as we know,  since   Mangasarian and Meyer \cite{MangasarianMeyer2006}  established  existence results for this class of absolute value equations \eqref{eq:vae},  the interest for this subject has increased substantially;  see \cite{ Mangasarian2015,  Mangasarian2014-2, Rohn2014} and reference therein.     

Several   algorithms have been designed to solve the systems of AVEs involving  smooth, semi-smooth and Picard techniques; see  \cite{Caccetta2011, Iqbal2015, Salkuyeh2014,  Mangasarian2013, Zhang2009}.  In   \cite{Mangasarian2009},   Mangasarian applied the nonsmooth Newton method for solving AVE  obtaining   global $Q$-linear   convergence  and showing  its  numerical effectiveness.  However, each semi-smooth Newton iteration  requires the exact solution of a linear system,  which  has an undesired effect on the computational performance of   this method.   The exact solution of the linear system, at each iteration of the method,  can be  computational expensive and may not be justified.  A well known alternative is to solve the linear systems involved   approximately.  A bound for the relative error tolerance to  solve subproblem guaranteeing global $Q$-linear convergence  will arise very clearly  in the present work.
Besides,  the inexact analysis support the efficient computational  implementations of the exact schemes, for this reason they are important and necessaries. Following  the ideas  of \cite{Dembo1982} and  \cite{Mangasarian2009},  we  use the  inexact nonsmooth Newton method for solving absolute value equations and present  some computational experiments designed to investigate its practical viability. 


The paper is organized as follows. The next subsection presents some notations and preliminaries that will
be used throughout the paper. Section \ref{sec:ssnm2} devotes the definition of the inexact Newton method and its global $Q$-linear convergence. Section \ref{sec:ctest}
provides an exhaustive discussion of the computational results of the inexact Newton method when it is compared with the exact one. We complete the paper with some conclusion for further study.
\subsection{Preliminaries} \label{sec:int.1}

In this section we present the notations and some auxiliary results used throughout the paper.
Let $\R^n$ be  denote the $n$-dimensional Euclidean space and  $\|\cdot\|$ a  norm.    The  $i$-th component of a vector $x
\in\R^n$ is denoted by $x_i$ for every $i=1,\ldots,n$. For $x\in \R^n$, $\sgn(x)$ denotes a vector with components equal to $-1$, $0$ or $1$ depending on whether the corresponding component of the
vector $x$ is negative, zero or positive.   Denote  $|x|$  the vectors with  $i$-th component equal to   $|x^i|$.
The set of all $n \times n$ matrices with real entries is denoted by $\R^{n \times n}$ and    $\R^n\equiv \R^{n \times 1}$.  The matrix ${\rm Id}$ denotes the $n\times n$ identity matrix.  If $x\in \R^n$ then $\diag (x)$ will denote an  $n\times n$  diagonal matrix with $(i,i)$-th entry equal
to $x^i$, $i=1,\dots,n$. For an $M \in \R^{n\times n}$ consider the norm defined by  $\|M\|:=\max  \{\|Mx\|~:~  x\in \R^{n}, ~\|x\|=1\}$. This definition implies
\begin{equation} \label{eq:np}
\|Mx\|\leq \|M\|\|x\|, \qquad \|L+M\|\le\|L\|+\|M\|, \qquad \|LM\|\le\|L\|\|M\|, 
\end{equation}
for any  matrices \(L, M \in \R^{n\times n}\) and \(x  \in \R^n\); see, for instance,   Chapter 5 of  \cite{HornJohnson1990}.  The next useful result was proved in  2.1.1,  page 32  of \cite{Ortega1990}.
\begin{lemma}[Banach's Lemma] \label{lem:ban}
Let $E \in \R^{n\times n}$. If  $\|E\|<1$,  then the matrix ${\rm Id}-E$
is invertible and  $ \|[{\rm Id}-E]^{-1}\|\leq 1/\left(1-
\|E\|\right). $
\end{lemma}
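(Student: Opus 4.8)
The plan is to construct the inverse explicitly via the Neumann series $\sum_{k=0}^{\infty}E^k$ and then estimate its norm. First I would observe that, by iterating the submultiplicativity bound $\|LM\|\le\|L\|\|M\|$ in \eqref{eq:np}, one has $\|E^k\|\le\|E\|^k$ for every $k\in\N$. Since $\|E\|<1$, the scalar geometric series $\sum_{k}\|E\|^k$ converges, so the partial sums $S_N:=\sum_{k=0}^{N}E^k$ form a Cauchy sequence in $\R^{n\times n}$: for $M>N$ the triangle inequality in \eqref{eq:np} gives $\|S_M-S_N\|\le\sum_{k=N+1}^{M}\|E\|^k$, and the right-hand side tends to $0$. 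Because $(\R^{n\times n},\|\cdot\|)$ is a finite-dimensional, hence complete, normed space, there is $S\in\R^{n\times n}$ with $S_N\to S$.

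Next I would show that $S$ is a two-sided inverse of ${\rm Id}-E$. The key is the telescoping identity
\begin{equation*}
S_N({\rm Id}-E)=({\rm Id}-E)S_N={\rm Id}-E^{N+1},
\end{equation*}
obtained by expanding the product. Since $\|E^{N+1}\|\le\|E\|^{N+1}\to 0$, letting $N\to\infty$ and using continuity of matrix multiplication and addition with respect to the norm yields $S({\rm Id}-E)=({\rm Id}-E)S={\rm Id}$. Hence ${\rm Id}-E$ is invertible with $[{\rm Id}-E]^{-1}=S$. For the norm estimate I would then use $\|S_N\|\le\sum_{k=0}^{N}\|E^k\|\le\sum_{k=0}^{N}\|E\|^k\le 1/(1-\|E\|)$, again from \eqref{eq:np}, and pass to the limit by continuity of the norm to obtain $\|[{\rm Id}-E]^{-1}\|=\|S\|\le 1/(1-\|E\|)$.

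The only genuinely delicate point is the convergence argument: one must invoke completeness of $(\R^{n\times n},\|\cdot\|)$ and continuity of the algebraic operations to push limits through products and through the norm. If one prefers to avoid infinite series entirely, an alternative purely finite-dimensional proof works: if $({\rm Id}-E)x=0$ then $\|x\|=\|Ex\|\le\|E\|\,\|x\|<\|x\|$ unless $x=0$, so ${\rm Id}-E$ is injective and therefore invertible; and for any $x$, setting $y:=[{\rm Id}-E]^{-1}x$ gives $\|x\|=\|y-Ey\|\ge\|y\|-\|E\|\,\|y\|=(1-\|E\|)\|y\|$, whence $\|[{\rm Id}-E]^{-1}\|\le 1/(1-\|E\|)$. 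I expect the series proof to be the intended one, since it also exhibits the inverse constructively.
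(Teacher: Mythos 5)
Your proof is correct. Note that the paper does not actually prove Lemma~\ref{lem:ban}: it simply quotes the result from 2.1.1, page 32 of \cite{Ortega1990}, so there is no in-paper argument to compare against. Your Neumann-series construction is the classical proof (submultiplicativity gives $\|E^k\|\le\|E\|^k$, completeness of $\R^{n\times n}$ gives convergence of the partial sums, the telescoping identity identifies the limit as the inverse, and the geometric bound gives the norm estimate), and your alternative finite-dimensional argument --- injectivity of ${\rm Id}-E$ from $\|x\|=\|Ex\|\le\|E\|\|x\|$, followed by the lower bound $\|({\rm Id}-E)y\|\ge(1-\|E\|)\|y\|$ --- is equally valid and is essentially the route taken in \cite{Ortega1990}; it avoids any limiting process at the cost of not exhibiting the inverse explicitly. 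Either version fully justifies the statement as used in the paper.
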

We end this section  quoting  the following result from combination of Lemma~2 of   \cite{Mangasarian2009} and Proposition 3 of \cite{MangasarianMeyer2006}, which gives us a sufficient condition for the invertibility of matrix  \(A-D(x)\)  for all \(x\in \mathbb{R}^n\) and existence of solution for AVE in \eqref{eq:vae}.
\begin{lemma}\label{le:nonsingGC}
Assume that the singular values of the matrix \(A\in \R^{n \times n}\) exceed \(1\). Then the matrix \(A-D(x)\)   is invertible for all \(x\in \mathbb{R}^n\). Moreover,   AVE in \eqref{eq:vae} has unique solution  for any $b\in \R^n$.
\end{lemma}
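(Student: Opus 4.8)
The plan is to derive both assertions from Banach's Lemma (Lemma~\ref{lem:ban}) and the contraction mapping principle, working throughout with the Euclidean (spectral) norm; this entails no loss of generality, since invertibility, existence and uniqueness do not depend on the chosen norm, whereas the hypothesis on the singular values is naturally phrased in the $2$-norm. First I would record what the hypothesis buys us: if all singular values of $A$ exceed $1$ and $\sigma>1$ denotes the smallest one, then $A$ has full rank, hence is invertible, and $\|A^{-1}\|=1/\sigma<1$. Moreover $D(x)$ is a diagonal matrix whose entries lie in $\{-1,0,1\}$, so $\|D(x)\|\le 1$ for every $x\in\R^n$.

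For the invertibility of $A-D(x)$, I would factor $A-D(x)=A\bigl({\rm Id}-A^{-1}D(x)\bigr)$ and estimate, using \eqref{eq:np}, $\|A^{-1}D(x)\|\le\|A^{-1}\|\,\|D(x)\|\le\|A^{-1}\|<1$. Lemma~\ref{lem:ban} then guarantees that ${\rm Id}-A^{-1}D(x)$ is invertible, and as a product of two invertible matrices $A-D(x)$ is invertible; this argument is uniform in $x\in\R^n$.

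For existence and uniqueness of a solution of \eqref{eq:vae}, I would consider the map $G\colon\R^n\to\R^n$ defined by $G(x):=A^{-1}\bigl(|x|+b\bigr)$, whose fixed points are precisely the solutions of \eqref{eq:vae}. From the coordinatewise inequality $\bigl||x_i|-|y_i|\bigr|\le|x_i-y_i|$ one obtains $\bigl\||x|-|y|\bigr\|\le\|x-y\|$, whence $\|G(x)-G(y)\|=\bigl\|A^{-1}(|x|-|y|)\bigr\|\le\|A^{-1}\|\,\|x-y\|$ by \eqref{eq:np}. Since $\|A^{-1}\|<1$, $G$ is a contraction on the complete space $\R^n$, so the Banach fixed-point theorem yields a unique fixed point, which is the unique solution of \eqref{eq:vae} for the prescribed $b$.

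The only steps requiring a little care are translating ``the singular values exceed $1$'' into the bound $\|A^{-1}\|<1$ and establishing the nonexpansiveness of the map $x\mapsto|x|$; the rest is a direct appeal to Lemma~\ref{lem:ban} and to the contraction principle. Alternatively, as the statement indicates, one may simply quote Lemma~2 of \cite{Mangasarian2009} and Proposition~3 of \cite{MangasarianMeyer2006}, which record exactly these two facts.
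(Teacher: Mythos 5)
Your proof is correct: with the spectral norm, the hypothesis that all singular values of $A$ exceed $1$ is exactly the statement $\|A^{-1}\|_2<1$, the bound $\|D(x)\|\le 1$ holds since $D(x)$ is diagonal with entries in $\{-1,0,1\}$, the factorization $A-D(x)=A\bigl({\rm Id}-A^{-1}D(x)\bigr)$ together with Lemma~\ref{lem:ban} gives invertibility, and the map $G(x)=A^{-1}(|x|+b)$ is a contraction whose unique fixed point is the unique solution of \eqref{eq:vae}. Note, however, that the paper offers no proof of this lemma at all: it is simply quoted as a combination of Lemma~2 of \cite{Mangasarian2009} and Proposition~3 of \cite{MangasarianMeyer2006}. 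Your argument is therefore a genuinely different (and self-contained) route: the cited sources obtain invertibility from the singular-value estimate $\sigma_{\min}(A-D(x))\ge\sigma_{\min}(A)-1>0$ and unique solvability via a reformulation as a linear complementarity problem, whereas you reduce everything to $\|A^{-1}\|_2<1$, reuse the Neumann-series device that the paper itself employs later in the proof of Theorem~\ref{th:mrq*} (there under the stronger bound $\|A^{-1}\|<1/3$), and in effect reprove Proposition~\ref{pr:mm} by the Banach fixed-point theorem. What your approach buys is elementarity and self-containedness (plus, as a by-product, convergence of the Picard iteration $x_{k+1}=A^{-1}(|x_k|+b)$); what the citation route buys is brevity and consistency with the literature the paper leans on. The only points needing the care you already flagged are the norm-independence of the conclusions, the identity $\|A^{-1}\|_2=1/\sigma_{\min}(A)$, and the nonexpansiveness of $x\mapsto|x|$, all of which you handle correctly.
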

\noindent The following proposition  was proved in Proposition 4 of \cite{MangasarianMeyer2006}.
\begin{proposition} \label{pr:mm}
Assume that \(A \in \R^{n\times n}\)  is   invertible.    If \(\left\|A^{-1}\right\|<1 \)  then  AVE in \eqref{eq:vae}   has unique solution  for any $b\in \R^n$.
\end{proposition}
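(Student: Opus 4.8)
The plan is to recast \eqref{eq:vae} as a fixed point problem and apply the contraction mapping principle. Since $A$ is invertible, a vector $x$ solves \eqref{eq:vae} if and only if it is a fixed point of the map $\Phi\colon\R^n\to\R^n$ given by $\Phi(x):=A^{-1}\big(|x|+b\big)$. As a first step I would record the componentwise reverse triangle inequality $\big|\,|x^i|-|y^i|\,\big|\le|x^i-y^i|$, which, combined with the monotonicity of the norm fixed in the Preliminaries (an absolute norm, e.g.\ any $\ell_p$-norm), yields $\big\|\,|x|-|y|\,\big\|\le\|x-y\|$; in other words, the absolute value map is nonexpansive.

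With this in hand, and using \eqref{eq:np},
\[
\|\Phi(x)-\Phi(y)\|=\big\|A^{-1}\big(|x|-|y|\big)\big\|\le\|A^{-1}\|\,\big\|\,|x|-|y|\,\big\|\le\|A^{-1}\|\,\|x-y\|,
\]
so $\Phi$ is a contraction on the complete space $\R^n$ with modulus $\|A^{-1}\|<1$. The Banach contraction theorem then provides a unique fixed point $x^{*}$ of $\Phi$, which is precisely the unique solution of \eqref{eq:vae}. If one prefers to separate the two claims, uniqueness can be obtained without any fixed point theorem: subtracting the equations satisfied by two solutions $x_1,x_2$ gives $A(x_1-x_2)=|x_1|-|x_2|$, hence $\|x_1-x_2\|\le\|A^{-1}\|\,\|x_1-x_2\|$, which forces $x_1=x_2$; existence then follows from the convergence of the Picard iterates $x_{k+1}=\Phi(x_k)$.

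The one place that deserves care is the nonexpansiveness estimate $\big\|\,|x|-|y|\,\big\|\le\|x-y\|$: it relies on the vector norm being absolute/monotone and on $\|A^{-1}\|$ being the operator norm induced by that same vector norm, both consistent with the conventions of the Preliminaries. Beyond that the argument is routine; note that no analogue of Banach's Lemma (Lemma~\ref{lem:ban}) is needed here, that tool entering later in the analysis of the Newton iteration rather than in this existence and uniqueness statement.
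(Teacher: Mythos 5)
Your proof is correct, and it is worth noting that the paper itself offers no proof of Proposition~\ref{pr:mm}: it simply imports the statement from Proposition~4 of \cite{MangasarianMeyer2006}. Your route --- rewriting \eqref{eq:vae} as the fixed-point problem $x=\Phi(x)$ with $\Phi(x)=A^{-1}(|x|+b)$, checking via \eqref{eq:np} and the nonexpansiveness of the absolute-value map that $\Phi$ is a contraction with modulus $\|A^{-1}\|<1$, and invoking Banach's contraction principle --- is the standard argument for this statement and is, up to presentation, what the cited source relies on; your alternative uniqueness-only argument ($A(x_1-x_2)=|x_1|-|x_2|$, hence $\|x_1-x_2\|\le\|A^{-1}\|\,\|x_1-x_2\|$) rests on the same estimate. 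The one point you rightly single out is genuine: the inequality $\|\,|x|-|y|\,\|\le\|x-y\|$ holds for absolute (monotone) norms such as every $\ell_p$-norm, but it can fail for a general norm (e.g.\ in $\R^2$ with $\|u\|=|u_1+u_2|+\delta(|u_1|+|u_2|)$, $x=(1,-1)$, $y=0$ give $\|\,|x|-|y|\,\|=2+2\delta>2\delta=\|x-y\|$), so your proof establishes the proposition for monotone vector norms with the induced operator norm --- which covers the Euclidean norm used in \cite{MangasarianMeyer2006} and in the computations, though not literally ``any norm satisfying \eqref{eq:np}'' as in the paper's later remark about its own theorems. You are also right that Lemma~\ref{lem:ban} plays no role here; it is only needed later, in the proof of Theorem~\ref{th:mrq*}, to get invertibility of $A-D(x)$.
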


\section{Inexact Semi-smooth Newton Method} \label{sec:ssnm2}
The {\it exact semismooth Newton method} \cite{LiSun93}  for  finding the zero of the semismooth function 
\begin{equation} \label{eq:fuc0} 
F(x):=Ax-|x|-b, 
\end{equation}
with starting point   \(x_0\in \mathbb{R}^n\),  is  formally  defined by 
\[
F(x_k)+ V_k\left(x_{k+1}-x_{k}\right)=0,  \qquad   V_k \in \partial F(x_k), \qquad k=0,1,\ldots,
\]
where    \( \partial F(x)\) denotes   the  Clarke generalized subdiferential  of \(F\) at \(x\in \mathbb{R}^n\); see~\cite{Clarke1990}. Letting 
\begin{equation} \label{eq:subgrad0}
D(x):=\mbox{diag}(\mbox{sgn}(x)), \qquad x\in  \R^n, 
\end{equation}
we obtain from \eqref{eq:fuc0} that    \(A-D(x) \in  \partial F(x)\). Hence,  the    {\it exact  semi-smooth Newton  method}    for solving the   AVE in  \eqref{eq:vae},   which was proposed by Mangasarian~\cite{Mangasarian2009}, generates a sequence   formally   stated as
\begin{equation} \label{eq:nmqc0}
\left[A-D(x_k)\right]x_{k+1}=b.
\end{equation}
 To solve  \eqref{eq:vae}, following  the ideas of \cite{Dembo1982}, we propose  an   {\it inexact semi-smooth Newton method},  starting at   \(x_0\in \mathbb{R}^n\) and {\it residual relative error tolerance} \(\theta \geq 0\),  by 
\begin{equation} \label{eq:newtonc2}
	\left\| \left[ A-D(x_k)\right]x_{k+1}-b\right\| \leq  \theta \left \|F(x_k) \right\|,  \qquad k=0,1,\ldots.
\end{equation}
Note that, in  absence of errors, i.e., \(\theta=0\),  the above iteration  retrieves   \eqref{eq:nmqc0}. In the next section we  analyze the convergence  properties of \(\{x_k\}\) generated by  the   {\it inexact semi-smooth Newton  method}. 
\subsection{Convergence Analysis} \label{sec:ssnm2*}
To establish convergence of the sequence \(\{x_k\}\),  generated by  \eqref{eq:newtonc2},  we need some auxiliary results and basic definitions.

The outcome of an inexact Newton iteration is any point satisfying some error tolerance. Hence, instead of a mapping for the inexact Newton iteration, we shall deal with a \emph{family} of mappings describing all possible inexact iterates.
\begin{definition} \label{def:inire}
  For \( \theta\geq 0\), \(\mathcal{N}_\theta\) is the family of maps  \(N_\theta:\mathbb{R}^n\to  \mathbb{R}^n\) such that
  \begin{equation}  \label{eq:in.map}
  \left\|\left[ A-D(x)\right]N_\theta(x)-b\right\| \leq  \theta \left \|F(x) \right\|, \qquad \forall ~x \in \mathbb{R}^n.
  \end{equation}
\end{definition}
If \(A-D(x)\)   is invertible for all \(x\in \mathbb{R}^n\), then the family $\mathcal{N}_0$ has a single element, namely,  the exact Newton iteration map \(N_{0}:\mathbb{R}^n \to \mathbb{R}^n\) defined by 
\begin{equation} \label{NF}
  N_{0}(x)=\left[A-D(x)\right]^{-1} b.
\end{equation}
Trivially, if $0\leq \theta \leq \theta '$ then
$\mathcal{N}_0\subseteq\mathcal{N}_\theta\subseteq\mathcal{N}_{\theta '}$. Hence $\mathcal{N}_\theta$ is non-empty for all $\theta\geq 0$.
\begin{remark} \label{eq:fp}
Let \(x\in \mathbb{R}^n\) and  \(A-D(x)\)   be  invertible.  For any $0<\theta<1 $ and $N_\theta\in\mathcal{N}_\theta$, \(N_\theta(x)=x\) if and only if  \(F(x)=0\). This means that the fixed points of the inexact Newton iteration map $N_\theta$ are the same fixed points of the \emph{exact} Newton
  iteration map.
\end{remark}
\begin{lemma} \label{le:cl}
Assume that  \(A-D(x)\)   is invertible for all \(x\in \mathbb{R}^n\).  Let  \( \theta \geq 0\) and \(N_\theta \in \mathcal{N}_\theta\). If  \(F(x_*)=0\)  then for each \( x\in \mathbb{R}^n\) there holds
\[
\left\|N_\theta(x)- x_*\right\|\leq  \left\| [A-D(x)]^{-1}\right\|\big[\theta  \left( \|A-D(x)\| +2\right) +  2\big] \|x-x_*\|.
\]
\end{lemma}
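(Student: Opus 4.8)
The plan is to bound $\|N_\theta(x) - x_*\|$ by inserting the exact Newton iterate $N_0(x) = [A-D(x)]^{-1}b$ as an intermediate point and using the triangle inequality:
\[
\|N_\theta(x) - x_*\| \le \|N_\theta(x) - N_0(x)\| + \|N_0(x) - x_*\|.
\]
For the first term, I would observe that $[A-D(x)]N_0(x) = b$, so
\[
N_\theta(x) - N_0(x) = [A-D(x)]^{-1}\big([A-D(x)]N_\theta(x) - b\big),
\]
and hence $\|N_\theta(x) - N_0(x)\| \le \|[A-D(x)]^{-1}\|\,\theta\|F(x)\|$ directly from Definition~\ref{def:inire}. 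For the second term, since $F(x_*) = 0$ means $Ax_* - |x_*| = b$, I would write $b = Ax_* - D(x_*)x_* = [A-D(x_*)]x_*$ (using $|x_*| = D(x_*)x_*$), and also note $|x| = D(x)x$, so that
\[
N_0(x) - x_* = [A-D(x)]^{-1}b - x_* = [A-D(x)]^{-1}\big(b - [A-D(x)]x_*\big) = [A-D(x)]^{-1}\big(D(x_*)x_* - D(x)x_*\big),
\]
giving $\|N_0(x) - x_*\| \le \|[A-D(x)]^{-1}\|\,\|(D(x)-D(x_*))x_*\| = \|[A-D(x)]^{-1}\|\,\||x| - |x_*|\,\| \le \|[A-D(x)]^{-1}\|\,\|x - x_*\|$, using the nonexpansiveness of the absolute-value map componentwise.

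It remains to re-express $\|F(x)\|$ in terms of $\|x - x_*\|$. Here I would use
\[
F(x) = Ax - |x| - b = Ax - D(x)x - [A-D(x_*)]x_* = [A-D(x)]x - [A-D(x_*)]x_* = [A-D(x)](x - x_*) + (D(x_*) - D(x))x_*,
\]
so that $\|F(x)\| \le \|A-D(x)\|\,\|x-x_*\| + \||x| - |x_*|\,\| \le \big(\|A-D(x)\| + 1\big)\|x - x_*\|$. Substituting this bound into the estimate for the first term and combining with the second term yields
\[
\|N_\theta(x) - x_*\| \le \|[A-D(x)]^{-1}\|\Big(\theta\big(\|A-D(x)\| + 1\big) + 1\Big)\|x - x_*\|,
\]
which is in fact slightly sharper than the claimed inequality; the stated constant $\theta(\|A-D(x)\|+2)+2$ follows a fortiori (one could also absorb a crude $\|F(x)\| \le \|A-D(x)\|\|x-x_*\| + 2\|x-x_*\|$ style bound if one prefers to keep the coefficients as written).

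I do not anticipate a genuine obstacle: invertibility of $A-D(x)$ is granted by hypothesis, so every inverse written above exists. The only point requiring a little care is the pair of identities $|x| = D(x)x$ and $|x_*| = D(x_*)x_*$ (immediate from the definition $D(x) = \diag(\sgn(x))$) together with the componentwise estimate $\||x| - |x_*|\,\| \le \|x - x_*\|$, which holds because $\big||x^i| - |x_*^i|\big| \le |x^i - x_*^i|$ for each coordinate and the norm is monotone on the nonnegative orthant — for the Euclidean norm this is immediate, and for a general norm one should note the argument uses monotonicity, which the paper's unspecified $\|\cdot\|$ is implicitly assumed to satisfy (or one restricts to the $2$-norm, consistent with the singular-value hypotheses elsewhere).
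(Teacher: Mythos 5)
Your overall strategy (inserting the exact iterate $N_0(x)$ and splitting by the triangle inequality) is workable and runs parallel to the paper's argument, but the key estimates contain a genuine error: you assert $\|(D(x)-D(x_*))x_*\| = \|\,|x|-|x_*|\,\|$, both when bounding $\|N_0(x)-x_*\|$ and when bounding $\|F(x)\|$. This is false: $D(x)x_*$ applies the sign pattern of $x$ to the vector $x_*$, and it equals neither $|x|$ nor $|x_*|$ on coordinates where $x$ and $x_*$ have opposite signs. A concrete counterexample with $n=1$: take $A=3$, $x_*=-1$ (so $b=-4$, and $A-D(x)\in\{2,3,4\}$ is always invertible), $x=10^{-3}$, $\theta=0$. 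Then $N_0(x)=[A-D(x)]^{-1}b=-2$, so $\|N_0(x)-x_*\|=1$, while your bound gives $\|[A-D(x)]^{-1}\|\,\|x-x_*\|=\tfrac12(1.001)\approx 0.5$; likewise $\|F(x)\|=4.002$ exceeds your claimed $(\|A-D(x)\|+1)\|x-x_*\|=3.003$. So the ``slightly sharper'' constant $\theta(\|A-D(x)\|+1)+1$ is not merely unproven, it is false, and the stated lemma cannot be obtained from it ``a fortiori.'' (There is also a harmless sign slip: $b-[A-D(x)]x_*=(D(x)-D(x_*))x_*$, not its negative.)

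The repair is exactly where the paper's constant $2$ comes from: write $(D(x)-D(x_*))x_* = D(x)(x_*-x)+\big(|x|-|x_*|\big)$, using $D(x)x=|x|$ and $D(x_*)x_*=|x_*|$, and bound each term by $\|x-x_*\|$ (via $\|D(x)\|\le 1$ and the componentwise inequality $\big||x_i|-|x_{*,i}|\big|\le|x_i-x_{*,i}|$, which, as you note, uses monotonicity of the norm --- an assumption the paper makes implicitly as well). This yields $\|(D(x)-D(x_*))x_*\|\le 2\|x-x_*\|$ and hence $\|F(x)\|\le(\|A-D(x)\|+2)\|x-x_*\|$ and $\|N_0(x)-x_*\|\le 2\|[A-D(x)]^{-1}\|\,\|x-x_*\|$. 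Feeding these corrected factors into your triangle-inequality decomposition reproduces precisely the stated inequality; the paper reaches the same bounds in one step by writing $N_\theta(x)-x_*=[A-D(x)]^{-1}\big([A-D(x)]N_\theta(x)-b+F(x_*)-F(x)-[A-D(x)](x_*-x)\big)$ and estimating the residual and the linearization error separately.
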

\begin{proof}
Let  \( x\in \mathbb{R}^n\).  After  simple  algebraic manipulations and taking into account that   \(F(x_*)=0\) and  \(\left[A-D(x)\right]x=Ax-|x|\),  we obtain
\[
N_\theta(x)- x_*=  \left[A-D(x)\right]^{-1} \big(\left[A-D(x)\right] N_\theta(x)-b +  \left[ F(x_*)-F(x)- \left[A-D(x)\right](x_*-x) \right]\big).
\]
Taking norm in both side of  above equality and using its properties in    \eqref{eq:np},  we conclude that 
\[
\left\|N_\theta(x)- x_*\right\|\leq \left\| [A-D(x)]^{-1}\right\| \left(\left\|\left[A-D(x)\right] N_\theta(x)-b\right\| +    \left\| F(x_*)-F(x)- \left[A-D(x)\right](x_*-x) \right\|\right).
\]
The combination of  Definition~\ref{def:inire} and   last inequality implies
\begin{equation} \label{eq:ini}
\left\|N_\theta(x)- x_*\right\|\leq \left\|[A-D(x)]^{-1}\right\|\big(\theta \left \|F(x) \right\| +   \left\| F(x_*)-F(x)- \left[A-D(x)\right](x_*-x) \right\|\big).
\end{equation}
On the other hand,  since   $F(x_*)=0$,  direct  algebraic manipulations  give us
\[ 
 F(x)=\left[A-D(x)\right](x- x_*) -  \left[ F(x_*)-F(x)- \left[A-D(x)\right](x_*-x) \right].
\]
Thus, taking norm in  both sides of  last equality and by   triangular inequality,   we have
\begin{equation}\label{eq-intermediaria}
\left\| F(x)\right\| \leq \left\|A-D(x)\right\| \|x- x_*\| +  \left\| F(x_*)-F(x)- \left[A-D(x)\right](x_*-x) \right\|.
\end{equation}
Now,  using that   \(D(x)x=|x|\), \(D(x_*)x_*=|x_*|\) and  definition in \eqref{eq:fuc0},   after some algebras,   we can write 
\[
F(x_*)-F(x)- \left[A-D(x)\right](x_*-x)= -(|x_*|-|x|)+D(x)(x_*-x).
\]
Since \(\|D(x)\|\leq 1\),   taking norms on both sides of last equality, and using the  triangular inequality together with properties of the norm, namely, the first one stated in \eqref{eq:np} yields
\begin{equation}\label{eq:partelinear}
\left\| F(x_*)-F(x)- \left[A-D(x)\right](x_*-x)\right\| \leq 2\|x_*-x\|.
\end{equation}
Therefore,  combining the last inequality and  \eqref{eq-intermediaria},    we  conclude that 
$$
\left\| F(x)\right\| \leq \left\|A-D(x)\right\| \|x- x_*\| +  2\|x_*-x\|.
$$
Substituting the last inequality and \eqref{eq:partelinear} into  \eqref{eq:ini}, we obtain 
\begin{equation} \label{eq:lntheta}
\left\|N_\theta(x)- x_*\right\|\leq \left\|[A-D(x)]^{-1}\right\|\big[\theta \big(\left\|A-D(x)\right\| \|x- x_*\|  +2\|x-x_*\|\big) +   2\|x-x_*\|\big], 
\end{equation}
which is equivalent to the desire inequality.
\end{proof}
Let  $0\leq \theta <1$ and  $\mathcal{N}_\theta$   in Definition~\ref{def:inire}.  Consider the sequence  \(\{x_k\}\) defined in  \eqref{eq:newtonc2}. Thus, there exists $N_{\theta} \in \mathcal{N}_\theta$  such that 
\begin{equation} \label{eq:NFSs}
x_{k+1}=N_{\theta}(x_k),\qquad k=0,1,\ldots \,.
\end{equation}
Now, we are ready to prove  the  two main results of this section.    
\begin{theorem}\label{th:mrq}
Let  \(A\in \R^{n \times n}\), \(b\in \R^n\) and $0\leq \theta <1$.  Assume that  \(A-D(x)\)   is invertible for all \(x\in \mathbb{R}^n\). Then,   the inexact semi-smooth Newton sequence \(\{x_k\}\),   given by \eqref{eq:newtonc2},  with any  starting  point   \(x_0\in \mathbb{R}^n\) and residual relative error tolerance $\theta$, 
 is well defined.
 Moreover, if 
\begin{equation} \label{eq:wdnm}
\left\|\left[A-D(x)\right]^{-1}\right\|< \frac{1}{2},   \qquad  \forall ~x \in \mathbb{R}^n, 
\end{equation}
then,  AVE in  \eqref{eq:vae} has unique solution. Additionally, if 
\begin{equation} \label{eq:theta0}
 0\leq \theta < \frac{1-2\left\| [A-D(x)]^{-1}\right\|}{\left\| [A-D(x)]^{-1}\right\|\left( \|A-D(x)\|+2\right)},   \qquad  \forall ~x \in \mathbb{R}^n.
\end{equation}
 then   $ \{x_k\}$   converges $Q$-linearly  to  \(x_*\in \mathbb{R}^n\),  a unique solution  of  \eqref{eq:vae}, as follows 
  \begin{equation} \label{eq:lconv3}
\left\|x_{k+1}- x_*\right\|\leq  \left\| [A-D(x_k)]^{-1}\right\|\left[\theta  \left(\|A-D(x_k)\|+2 \right) +  2\right] \|x_{k}-x_*\|,
\end{equation}
for all \(k=0, 1, \ldots\).  
\end{theorem}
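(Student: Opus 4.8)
The assertion breaks into three essentially independent pieces. For \textbf{well-definedness}: since $A-D(x_k)$ is invertible and $\theta\ge 0$, the point $[A-D(x_k)]^{-1}b$ satisfies \eqref{eq:newtonc2} with zero residual, so an admissible $x_{k+1}$ always exists; thus $\{x_k\}$ is well defined for every $x_0$ and every $\theta\ge 0$, and, as recorded in \eqref{eq:NFSs}, it can be written $x_{k+1}=N_\theta(x_k)$ for a single $N_\theta\in\mathcal N_\theta$. For \textbf{existence and uniqueness of the solution}: I would evaluate \eqref{eq:wdnm} at $x=0$. Since $\sgn(0)=0$, $D(0)$ is the zero matrix and $A-D(0)=A$, which is invertible by the standing hypothesis, and \eqref{eq:wdnm} gives $\|A^{-1}\|=\|[A-D(0)]^{-1}\|<1/2<1$; Proposition~\ref{pr:mm} then furnishes a (necessarily unique) solution $x_*$ of \eqref{eq:vae}, i.e. $F(x_*)=0$. (Uniqueness can also be obtained directly from Lemma~\ref{le:cl} with $\theta=0$: two solutions $x_*,y_*$ satisfy $N_0(y_*)=y_*$, hence $\|y_*-x_*\|\le 2\|[A-D(y_*)]^{-1}\|\,\|y_*-x_*\|<\|y_*-x_*\|$ unless $x_*=y_*$.)

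For the \textbf{$Q$-linear rate}, fix the unique solution $x_*$ and the map $N_\theta$ above, and apply Lemma~\ref{le:cl} with $x=x_k$: this is precisely the displayed bound \eqref{eq:lconv3}. Writing
\[
c_k:=\big\|[A-D(x_k)]^{-1}\big\|\big[\theta\big(\|A-D(x_k)\|+2\big)+2\big],
\]
a one-line rearrangement of \eqref{eq:theta0} evaluated at $x=x_k$ shows $c_k<1$ for every $k$. To pass from ``$c_k<1$ for each $k$'' to actual convergence with a uniform rate, I would use that $D(x)=\diag(\sgn(x))$ takes at most $3^n$ values (one per sign pattern in $\{-1,0,1\}^n$), so $\{A-D(x):x\in\R^n\}$ is a finite set and the quantity defining $c_k$, with $x_k$ replaced by an arbitrary $x$, ranges over a finite set; by \eqref{eq:theta0} each of its elements is $<1$, so its maximum is some $c\in[0,1)$. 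Then \eqref{eq:lconv3} yields $\|x_{k+1}-x_*\|\le c\,\|x_k-x_*\|$ for all $k$, whence $\|x_k-x_*\|\le c^k\|x_0-x_*\|\to 0$, and $\{x_k\}$ converges $Q$-linearly to $x_*$ with the stated per-step estimate.

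The only step that is not a direct citation or a routine algebraic manipulation is this last uniformization: by themselves the inequalities $c_k<1$ do not preclude $c_k\to 1$, in which case one could only conclude that the errors are nonincreasing. The finiteness of the family $\{A-D(x)\}$ is exactly what makes a single contraction constant available, and it is also the reason why it is harmless, and indeed natural, to impose \eqref{eq:wdnm} and \eqref{eq:theta0} uniformly over all $x\in\R^n$ rather than merely along $\{x_k\}$.
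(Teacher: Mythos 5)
Your proof is correct and follows essentially the same route as the paper: well-definedness from the invertibility of $A-D(x)$, existence and uniqueness from Proposition~\ref{pr:mm} applied at $x=0$ (where $D(0)=0$ so \eqref{eq:wdnm} gives $\|A^{-1}\|<1/2<1$), and the estimate \eqref{eq:lconv3} from Lemma~\ref{le:cl} together with \eqref{eq:NFSs}, with \eqref{eq:theta0} making each per-step factor less than $1$. The one place you go beyond the paper is the uniformization step: the paper passes directly from the per-iterate inequalities to $Q$-linear convergence, whereas you note that $D(x)=\diag(\sgn(x))$ takes at most $3^n$ values, so the contraction factors range over a finite set each strictly below $1$, yielding a single constant $c<1$; this correctly fills in a point the paper leaves implicit.
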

\begin{proof} For any starting point \(x_0 \in \R^{n}\),  by  Definition~\ref{def:inire} and \eqref{eq:newtonc2},   the well-definedness of  \(\{x_k\}\)  follows from invertibility of \(A-D(x)\)  for all \(x\in \mathbb{R}^n\).  The uniqueness of the solution follows from  Proposition~\ref{pr:mm},  by taking $x=0$ in \eqref{eq:wdnm}.  Since \(x_*\) is the solution of \eqref{eq:vae}  we have \( F(x_*)=0\). Hence, using Lemma~\ref{le:cl} and  \eqref{eq:NFSs},  it is immediate to conclude that $\{x_k\}$  also satisfies \eqref{eq:lconv3}.   On the other hand,  using  \eqref{eq:wdnm} and assumption on \(\theta\), i.e,   the inequalities in   \eqref{eq:theta0},   we conclude that
\[
  \left\|[A-D(x_k)]^{-1}\right\|\left[\theta    \left(\|A-D(x_k)\|+2\right) +  2\right]<1, \qquad k=0, 1, \ldots, 
\]
which, taking into account \eqref{eq:lconv3},  implies  that  \(\{x_k\}\) converges  $Q$-linearly   to \(x_*\). 
\end{proof}
\begin{remark}
In the absence of errors, i.e., \(\theta=0\), Theorem~\ref{th:mrq}  retrieves Lemma~6 of \cite{Mangasarian2009} on the exact  semi-smooth Newton method. Emphasizing that our assumption in \eqref{eq:wdnm} is weaker than the used by Mangasarian in Lemma~6 of \cite{Mangasarian2009}, i.e., $\|\left[A-D(x)\right]^{-1}\|< 1/3$ for all $x\in \mathbb{R}^n$. 
\end{remark}
\begin{theorem}\label{th:mrq*}
Let \(A \in \R^{n\times n}\)  be  a invertible matrix, $b\in \R^n\).  Assume that  
\begin{equation}\label{Inv-1/3}
\left\|A^{-1}\right\|<\frac{1}{3}.
\end{equation} 
Then,  the  inexact semi-smooth Newton sequence \(\{x_k\}\),   for solving   \eqref{eq:vae},  with  any starting  point   \(x_0\in \mathbb{R}^n\) and residual relative error tolerance $\theta\geq 0$, 
 is well defined.  Moreover, if 
\begin{equation} \label{eq:theta}
 0\leq \theta < \frac{1-3\left\| A^{-1}\right\|}{ \left\| A^{-1}\right\|(\left\| A\right\|+3)}, 
\end{equation}
 then   $ \{x_k\}$   converges $Q$-linearly  to  \(x_*\in \mathbb{R}^n\),  the unique solution  of  \eqref{eq:vae}, as follows 
\begin{equation} \label{eq:conv}
\left\|x_{k+1}- x_*\right\|\leq  \frac{\left\|A^{-1}\right\|}{1-\left\|A^{-1}\right\|}\left[\theta  \left(\|A\|+3\right)  +  2\right] \|x_k-x_*\|, \qquad k=0, 1, \ldots.
\end{equation}
\end{theorem}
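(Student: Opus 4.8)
The plan is to reduce Theorem~\ref{th:mrq*} to Theorem~\ref{th:mrq} by bounding the quantities $\|[A-D(x)]^{-1}\|$ and $\|A-D(x)\|$ uniformly in $x$ in terms of $\|A^{-1}\|$ and $\|A\|$. First I would factor $A-D(x)=A\big({\rm Id}-A^{-1}D(x)\big)$ and observe that $\|A^{-1}D(x)\|\le\|A^{-1}\|\,\|D(x)\|\le\|A^{-1}\|<1/3<1$, since $\|D(x)\|\le 1$. Banach's Lemma (Lemma~\ref{lem:ban}), applied to $E=A^{-1}D(x)$, then shows that ${\rm Id}-A^{-1}D(x)$ is invertible, hence so is $A-D(x)$ for every $x\in\mathbb{R}^n$, and
\[
\big\|[A-D(x)]^{-1}\big\|=\big\|\big({\rm Id}-A^{-1}D(x)\big)^{-1}A^{-1}\big\|\le\frac{\|A^{-1}\|}{1-\|A^{-1}\|}.
\]
As $t\mapsto t/(1-t)$ is increasing on $[0,1)$ and $\|A^{-1}\|<1/3$, the right-hand side is strictly below $(1/3)/(2/3)=1/2$, so \eqref{eq:wdnm} holds. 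The invertibility of all the matrices $A-D(x)$ already gives well-definedness of $\{x_k\}$ through Definition~\ref{def:inire} and \eqref{eq:newtonc2}, for every $\theta\ge 0$.

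Next I would record the elementary bound $\|A-D(x)\|\le\|A\|+\|D(x)\|\le\|A\|+1$, i.e. $\|A-D(x)\|+2\le\|A\|+3$ for all $x$. Combining this with the previous estimate and using that $(t,s)\mapsto(1-2t)/(ts)$ is decreasing in each variable on the relevant range, the right-hand side of the admissibility condition \eqref{eq:theta0} is bounded below by
\[
\frac{1-2\,\|A^{-1}\|/(1-\|A^{-1}\|)}{\big(\|A^{-1}\|/(1-\|A^{-1}\|)\big)(\|A\|+3)}=\frac{1-3\|A^{-1}\|}{\|A^{-1}\|(\|A\|+3)},
\]
which is exactly the bound in \eqref{eq:theta}. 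Moreover, since $\|A\|\,\|A^{-1}\|\ge\|AA^{-1}\|=1$, one checks that this common bound is itself $<1$, so any $\theta$ satisfying \eqref{eq:theta} also satisfies $0\le\theta<1$ and \eqref{eq:theta0}. Thus all hypotheses of Theorem~\ref{th:mrq} are in force.

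Finally I would invoke Theorem~\ref{th:mrq}: it yields the unique solvability of \eqref{eq:vae} and the estimate \eqref{eq:lconv3}. Substituting into the right-hand side of \eqref{eq:lconv3} the uniform bounds $\|[A-D(x_k)]^{-1}\|\le\|A^{-1}\|/(1-\|A^{-1}\|)$ and $\|A-D(x_k)\|+2\le\|A\|+3$ produces precisely \eqref{eq:conv}, and the contraction factor is $<1$ by the computation above, giving $Q$-linear convergence. I do not anticipate a genuine obstacle; the only delicate point is the chain of monotonicity arguments needed to pass from the $x$-dependent bounds of Theorem~\ref{th:mrq} to the clean $x$-independent expressions \eqref{eq:theta}--\eqref{eq:conv}, in particular verifying that \eqref{eq:theta} truly implies \eqref{eq:theta0} and not merely resembles it.
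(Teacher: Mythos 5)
Your proposal is correct and follows essentially the same route as the paper: the factorization $A-D(x)=A\left[{\rm Id}-A^{-1}D(x)\right]$ with Banach's Lemma, the uniform bounds $\left\|[A-D(x)]^{-1}\right\|\le \left\|A^{-1}\right\|/\left(1-\left\|A^{-1}\right\|\right)$ and $\left\|A-D(x)\right\|\le\left\|A\right\|+1$, and Proposition~\ref{pr:mm} for uniqueness. The only organizational difference is that you deduce the result from Theorem~\ref{th:mrq} (checking \eqref{eq:wdnm}, $\theta<1$, and that \eqref{eq:theta} implies \eqref{eq:theta0} by monotonicity), whereas the paper applies Lemma~\ref{le:cl} directly and then inserts the same uniform bounds; since \eqref{eq:lconv3} is exactly Lemma~\ref{le:cl} applied to the iterates, the two arguments coincide in substance and your monotonicity verification is sound.
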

\begin{proof}
Let  \(x \in \mathbb{R}^n\). Since  \(\left\|A^{-1}\right\|<1/3 \),  by  \eqref{eq:np}, we have  \(\|A^{-1}D(x)\| <1/3<1 \).  Thus,     from Lemma~\ref{lem:ban}  it follows that  \( {\rm Id}-A^{-1}D(x)\) is invertible. Since \(A \in \R^{n\times n}\)  is   invertible and 
 \[
  A-D(x)=A \left[{\rm Id}-A^{-1}D(x)\right],
 \]
 we conclude  that  \( A-D(x)\) is invertible. Hence \(\{x_k\}\)  is well defined,  for any starting  point   \(x_0\in \mathbb{R}^n\).   
  
Since  \(\left\|A^{-1}\right\|<1 \),  the uniqueness follows from  Proposition~\ref{pr:mm}.  Let  \(x_*\in \mathbb{R}^n\) be the  unique solution  of  \eqref{eq:vae}.   Since  \(A-D(x)\)   is invertible for all \(x\in \mathbb{R}^n\), we may apply Lemma~\ref{le:cl} to obtain 
\[
\left\|N_\theta(x_k)- x_*\right\|\leq  \left\| [A-D(x_k)]^{-1}\right\|\left[\theta  \left( \|A-D(x_k)\| +2\right) +  2\right] \|x_k-x_*\|,  \qquad k=0, 1, \ldots.
\]
 On the other hand, it is easy to see that  
\[
\left\| [A-D(x_k)]^{-1}\right\| = \left\| [{\rm Id}-A^{-1}D(x_k)]^{-1}A^{-1}\right\| \leq  \left\| [{\rm Id}-A^{-1}D(x_k)]^{-1}\right\| \left\|A^{-1}\right\|,  \qquad k=0, 1, \ldots. 
\]
Thus, combining two last inequality we conclude that 
\[
\left\|N_\theta(x_k)- x_*\right\|\leq  \left\| [{\rm Id}-A^{-1}D(x_k)]^{-1}\right\| \left\|A^{-1}\right\| \left[\theta  \left( \|A-D(x_k)\| +2\right) +  2\right] \|x_k-x_*\|,  \qquad k=0, 1, \ldots.
\]
Hence, since  \(\|A^{-1}D(x_k)\|\leq \|A^{-1}\|<1/3 \), the  inequality \eqref{eq:conv} follows  from  Lemma~\ref{lem:ban} and considering that \(\|A-D(x_k)\|\leq \|A\|+1\).  Finally, we conclude that  $ \{x_k\}$   converges $Q$-linearly  to  \(x_*\), by taking into account \eqref{Inv-1/3} and  \eqref{eq:theta}. 
\end{proof}
\begin{remark}
In the absence of errors, i.e., \(\theta=0\), Theorem~\ref{th:mrq*}  retrieves Proposition~7 of  \cite{Mangasarian2009} on the exact  semi-smooth Newton method. Emphasizing that our assumption in \eqref{Inv-1/3} is weaker than the used by Mangasarian in Proposition~7 of  \cite{Mangasarian2009}, i.e., $\|A^{-1}\|< 1/4$.
\end{remark}

Note that the quantity in the right hand side in  \eqref{eq:theta}  is less than \(1/{\rm cond}(A)\), where \({\rm cond}(A)=\|A\|\|A^{-1}\|\). Hence,  for ill conditioned equations, $\theta$ in \eqref{eq:theta} has to be chosen small and hence the precision for solving \eqref{eq:newtonc2} may be high. It is worth to mention that, all ours  results  hold for any  matrix norm satisfying \eqref{eq:np}. Observe that, the upper bound for $\theta$ in \eqref{eq:theta},   for the norms   $||. ||_{1}$ and $\| . \|_{\infty}$   are easy to compute.

In next result,  we discuss a condition for identify a solution of the AVE  throughout   the sequence.
\begin{proposition}\label{prop1}
Assume that  \(A-D(x)\)   is invertible for all \(x\in \mathbb{R}^n\). If $x_{k+1}=x_k$ for some $k$, then $x_{k}$ is solution of \eqref{eq:vae}. 
\end{proposition}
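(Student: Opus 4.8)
The plan is to combine the defining inequality \eqref{eq:newtonc2} of the inexact iteration with the elementary identity $D(x)x=|x|$ already used in the proof of Lemma~\ref{le:cl}. First I would note that the invertibility of $A-D(x)$ for all $x\in\mathbb{R}^n$, assumed in the statement, makes the sequence $\{x_k\}$ well defined, so that $x_{k+1}$ genuinely satisfies $\|[A-D(x_k)]x_{k+1}-b\|\le\theta\|F(x_k)\|$ for the residual relative error tolerance $\theta\in[0,1)$ underlying $\{x_k\}$.

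Next I would fix a $k$ with $x_{k+1}=x_k$ and substitute this equality into the inequality above, obtaining $\|[A-D(x_k)]x_k-b\|\le\theta\|F(x_k)\|$. Since $D(x_k)x_k=|x_k|$, the vector inside the left-hand norm equals $Ax_k-|x_k|-b$, which is exactly $F(x_k)$ by \eqref{eq:fuc0}. Hence $\|F(x_k)\|\le\theta\|F(x_k)\|$, i.e.\ $(1-\theta)\|F(x_k)\|\le 0$, and since $\theta<1$ this forces $F(x_k)=0$; by \eqref{eq:vae} this means precisely that $x_k$ solves the AVE.

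There is essentially no obstacle here: the argument is a single substitution together with the observation that $[A-D(x_k)]x_k-b=F(x_k)$. The only points deserving a word of care are making that identity explicit via $D(x)x=|x|$, and noting that the conclusion genuinely needs $\theta<1$ — this proposition is the sequence-level counterpart of Remark~\ref{eq:fp}, and one could alternatively invoke that remark with $N_\theta(x_k)=x_{k+1}=x_k$, though the direct computation also covers the borderline case $\theta=0$.
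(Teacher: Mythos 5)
Your proof is correct and follows essentially the same route as the paper: the paper's own proof just observes $x_k=N_\theta(x_k)$ and cites Remark~\ref{eq:fp}, and your direct substitution of $x_{k+1}=x_k$ into \eqref{eq:newtonc2}, together with the identity $[A-D(x_k)]x_k-b=F(x_k)$ and $\theta<1$, is exactly the computation underlying (one direction of) that remark. The only minor difference is that your unpacked version makes explicit where $\theta<1$ is used and covers the case $\theta=0$ without appealing to the remark's hypothesis $0<\theta<1$.
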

\begin{proof}
Since $x_{k+1}=x_k$, it follows from \eqref{eq:NFSs} that   \(x_{k}=N_{\theta}(x_k)\). Thus,  from Remark~\ref{eq:fp} we conclude that $ F(x_{k})=0$, which implies that $x_{k}$ is solution of \eqref{eq:vae}. 
\end{proof}
If $\theta=0$ then $D(x_k)=D(x_{k+1})$ is a sufficient condition for $x_{k+1}$ be solution of  \eqref{eq:vae}; see \cite{Mangasarian2009}. However, this does not occur in the inexact semi-smooth Newton method defined by \eqref{eq:newtonc2}, as observed in the next example.
\begin{example}
Setting the data of problem \eqref{eq:fuc0} as $A=4\,{\rm Id}$ and $b=e$, where $e$ is the ones vector in $\mathbb{R}^n$, we obtain  $F(x)=4x-|x|-e$. Note that $x_*=(1/3)\,e$ is the unique  solution of problem \eqref{eq:vae} and $\|[A-D(x)]^{-1}\|<1/2$ for all $x\in\mathbb{R}^n$ and $\|A^{-1}\|=1/4<1/3$ within the below assumptions \eqref{eq:wdnm} in Theorem~\ref{th:mrq} and \eqref{Inv-1/3} in Theorem~\ref{th:mrq*}, respectively. Starting at $x_0=e$, the iteration \eqref{eq:newtonc2} leads us
$$
\|[A-D(x_0)]x_{1}-b\|=\left\|3x_1-e\right\|\le\theta\left\|3x_0-e\right\|=\|2\theta e\|,$$ for all $0<\theta<1$. Thus, we can take  $x_1=[(2\theta+1)/3]e$, implying  $D(x_0)=D(x_1)={\rm Id}$ and $$F(x_1)=[A-D(x_1)]\,x_1-b=3x_1-e=2\theta e\neq0.$$ Therefore, $x_1$ is not solution.
\end{example}
The invertibility  of \(A-D(x)\),  for all \(x\in \mathbb{R}^n\),  is sufficient to the  well-definition of the exact and inexact Newton's method. However,   the next example shows that, for the convergence of these methods,  an additional condition on  $A$ must be assumed, for instance,   \eqref{eq:wdnm} and \eqref{Inv-1/3}.
\begin{example}
Consider the  function $F: \mathbb{R}^2 \to \mathbb{R}^2$ defined by  $F(x)=Ax-|x|-b$, where
$$
A=\begin{bmatrix}
1 & -1 \\
3 & -1
\end{bmatrix},  \qquad 
b=\begin{bmatrix}
-1\\
-3 
\end{bmatrix}.
$$
Note that  $\|A^{-1}\|=1,7071...$ and the matrices \(A-D(x)\) are invertible,  for all \(x\in \mathbb{R}^2\). Moreover,   $F$ has $x_*=-e$ as the unique zero, where $e$ is the ones vector in $\R^2$.  Applying exact Newton's method starting with $x_0=e$,  for finding the zero of $F$, the generated sequence oscillates between the points
$$
x_1=\displaystyle \begin{bmatrix}
-\frac{1}{3}\\
1
\end{bmatrix}, \qquad 
x_2=\displaystyle \begin{bmatrix}
1\\
3 
\end{bmatrix}.
$$
\end{example}
\section{Computational Results} \label{sec:ctest}

In order to verify the effectiveness of our approach, we compared the performance of the exact and inexact semi-smooth Newton methods for solving several AVEs.
In a first group of tests, $A$ is supposed to be a large-scale sparse matrix. 
The influence of the condition number and density of $A$ were also investigated.
In many considered cases the performance of the inexact semi-smooth Newton methods is remarkably better than that of the exact one.
All codes were implemented in Matlab 7.11.0.584 (R2010b) and are free available in \textit{https://orizon.mat.ufg.br/admin/pages/11432-codes}.
The experiments were run on a 3.4 GHz Intel(R) i7 with 4 processors, 8Gb of RAM and Linux operating system.
Following, we enlighten some implementation details.

\medskip

\noindent {\bf (i)} {\it Convergence criteria}: The implemented methods stop at iterate $x_k$ reporting ``Solution found'' if AVE is solved with accuracy $10^{-8}$. 
 This means that the 2-norm of $Ax_k-|x_k|-b$ is less than or equal to $10^{-8}$. 
 In some cases we use a different tolerance that will be opportunely reported. The methods also stop reporting failure if the number of iterations exceeds $50$.
 
 \medskip

\noindent {\bf (ii)} {\it Generating random problems}: To construct matrix $A$ we used the Matlab routine {\it sprand}, which generates a sparse matrix with predefined dimension, density and singular values. 
 Firstly, we defined the dimension $n$ and random 
generated the vector of singular values $rc$ from a uniform distribution on $(0 \ \ 1)$. To ensure fulfillment of the hypothesis of Theorem \ref{th:mrq*}, we rescale $rc$ by 
multiplying it by 3 divided by the minimum singular value multiplied by a random number in the interval $(0 \ \ 1)$. Finally, we evoke {\it sprand}. In this case, $A$ is generated by random plane 
rotations applied to a diagonal matrix with the given singular values $rc$. After that, we chose a random solution $x$ and initial point $x_0$ from a uniform distribution on $(-100 \ \ 100)$ and 
computed $b=A x -|x|$.  Note that, since the singular values of $A$ are known, the calculation of the residual relative error tolerance $\theta$ is simple. In particular, we defined $\theta$ as the 
right hand side of \eqref{eq:theta} multiplied by $0.9999$.

\medskip

\noindent {\bf (iii)} {\it Solving linear equations}:  In each iteration of the exact semi-smooth Newton method, the linear system \eqref{eq:nmqc0} must be exactly solved. 
In this case, we used the {\it mldivide} (same as {\it backslash}) command of Matlab. 
In order to take advantages of the structure of the problem, {\it mldivide} performs a study of the matrix of the linear system.
The first distinction the function makes is between dense and sparse input arrays (sparsity must be informed by the user).
Since the random matrix $A$ has no special structure, {\it mldivide} should compute the bandwidth of $A$ and use a banded solver in the case of $A$ is sparse (band density less than or equal to $0.5$), 
or use LU solver, otherwise. LU factorization can be used for both (informed) sparse and dense matrices.
On the order hand, the inexact semi-smooth Newton iteration requires to approximately solve the linear system \eqref{eq:nmqc0} in the sense of \eqref{eq:newtonc2}. 
Matlab has several iterative methods for linear equations. 
In order to choose the most appropriate one, we performed a preliminary test comparing the performance of all of them. We run the inexact semi-smooth Newton method with each 
iterative methods for $50$ problems with $n = 400$ and density equal to $0.01$. The method with the routine {\it lsqr} was the most efficient in {\it all} considered problems.
Therefore, in all forthcoming tests, we used {\it lsqr} as iterative method to approximately solve linear equations. The routine {\it lsqr} is an algorithm for sparse linear equations and sparse least 
squares based on \cite{lsqr}. We highlight that $x_k$ is used as a starting point by {\it lsqr} to solve the linear system of $k+1$ iteration.
In the numerical tests of Section \ref{sparseAVE} where the density of $A$ is approximately equal to 0.003, the matrices of linear systems were reported to be sparse for both exact and inexact semi-smooth 
Newton methods. In Section \ref{denseAVE}, where we analyzed the influence of higher densities, linear systems were solved considering full matrices for both methods.

\medskip


We presented the numerical comparisons using performance profiles graphics; see \cite{pprofile}.
Defined a performance measurement (for example, CPU time), performance profiles are very useful tools to compare several methods on a large set of test problems.
In a simple manner, they allow us to compare efficiency and robustness. 
The efficiency of a method is related to the percentage of problems for which the method was the fastest one, while robustness is related to the percentage of problems for which the method found a solution. 
In a performance profile, efficiency and robustness can be accessed on the extreme left (at $1$ in domain) and right of the graphic, respectively.
In all group of tests, we use CPU time as performance measurement. In order to obtain more accurately the CPU time, we run all test problems in each method ten times and 
we defined the corresponding CPU time as the median of these measurements.
For each problem, we consider that a method is the most efficient if its runtime does not exceed in $5\%$ the CPU time of the fastest one.

\subsection{Large-Scale Sparse AVEs} \label{sparseAVE}

It is well-known that direct methods to solve linear systems can be impractical if the associated matrix is large and sparse. 
Basically, there are two reasons. First, the factorization of the matrix can cause fill-in, losing the sparsity structure and generating additional computational cost.
Second, a floating-point error made in one step spreads further in all following ones. Therefore, it is rarely true in applications that direct methods give the exact solution of linear systems and, 
in some cases, they are not able to find the solution with a desired accuracy. In the forthcoming numerical results, this phenomenon will appear very clearly.
It is opportune to mention that there are many direct methods for large-scale sparse linear systems that seek to alleviate these drawbacks; see, for example, \cite{davis}.
On the other hand, iterative methods do not exhibit these disadvantages because they do not work with matrix factorization and they are self-correcting, in the sense that, 
if floating-point errors affect $x_k$ during the $k$-th iteration, $x_k$ can be considered as a new starting point, not affecting the convergence of the method. 
Consequently, iterative methods are, in general, more robust than the direct ones. 

Seeking to solve an AVE, the exact and inexact Newton iterates deal with a large-scale sparse linear system when matrix $A$ has these characteristics. 
The previous discussion allows us to conjecture that, in this case, the inexact Newton method is better than the exact one.
In an attempt to confirm this intuition, in the first group of tests, we generated $200$ AVEs with 
$n = 10000$ and density of $A$ approximately equal to $0.003$. This means that, only about $0.3\%$ of the elements of $A$ are non null.
In this set of problems, the average condition number of $A$ is approximately $40$ (lowest and largest values is $1.87$ and $1610$, respectively), while the average $\theta$ is of order of 
$10^{-2}$. Figure \ref{figwellcond} shows a comparison, using performance profiles, between exact and inexact semi-smooth Newton methods in this set of test problems.

\begin{figure}
	\centering
		\includegraphics[scale=0.55]{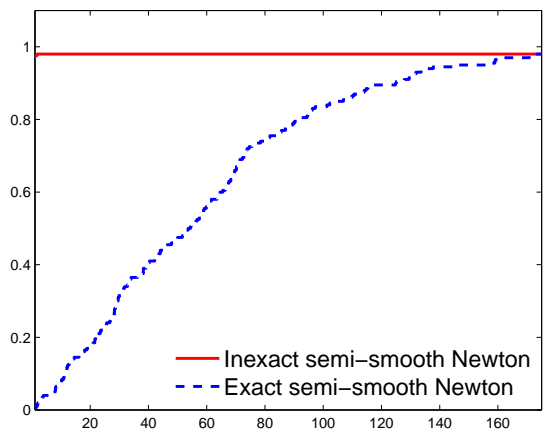}\\

\caption{Performance profile comparing Inexact {\it versus} Exact semi-smooth Newton method for large-scale sparse AVEs with well conditioned matrices.}
\label{figwellcond}
\end{figure}

Analyzing Figure \ref{figwellcond}, we see that, as expected, the inexact method is more efficient than the exact one. However, it may be surprising that the difference is so stark.
Efficiencies of the methods are $97,5\%$ and $5\%$, respectively, for the inexact and exact versions. 
The robustness is $98\%$ for both ones. The methods failed to solve the same four problems. Nevertheless, it is interesting to point out that, for this four problems, both methods 
reached a very close accuracy from the required one to report ``Solution found''.
In this first set of problems, the exact Newton method was faster in just {\it one} problem (21.4s against 35.6s of the inexact method).
Curiously, this is the problem for which the condition number of $A$ is the highest value among all considered ones.
Considering only the others solved problems, the methods spent in total 201s and  4541s for the inexact and exact versions, respectively. 
The inexact method solves a typical problem of this set in less than a second while the exact one takes about 20s (in average, the exact method demands 57 times the runtime of the inexact one).
The average number of iterations was, respectively, 9.6 and 3.4, showing that, as expected, the exact iteration requests greater computational effort than the inexact one.


Since average condition number of the matrices of the first set problems are relatively small (consequently, in average, the matrices are well conditioned),  we investigated the performance of the methods in sets of ill-conditioned matrices. This analysis is pertinent because, as observed, the right hand side of \eqref{eq:theta} is less than the inverse of condition number. So, for ill-conditioned matrices, the iterative solver for linear equations needs to be much required. 
It is not reasonable to set a value for the condition number to claim a matrix as ``ill-conditioned''.
Therefore, in this second phase, we generated two sets of $200$ AVEs. In the first one, the condition numbers 
of all matrices $A$ are greater than or equal to $10^2$ (the average is $4.73 \times 10^2$) while, in the second one, are greater than or equal $10^5$ (the average is $1.72 \times 10^5$). 
We emphasize that in these second phase, we keep $n = 10000$ and the density of $A$ approximately equal to $0.3\%$.
Figure \ref{figill} shows a comparison between exact and inexact semi-smooth Newton methods in this two sets of test problems.

\begin{figure}

\begin{minipage}[b]{0.50\linewidth}

\begin{figure}[H]
	\centering
		\includegraphics[scale=0.55]{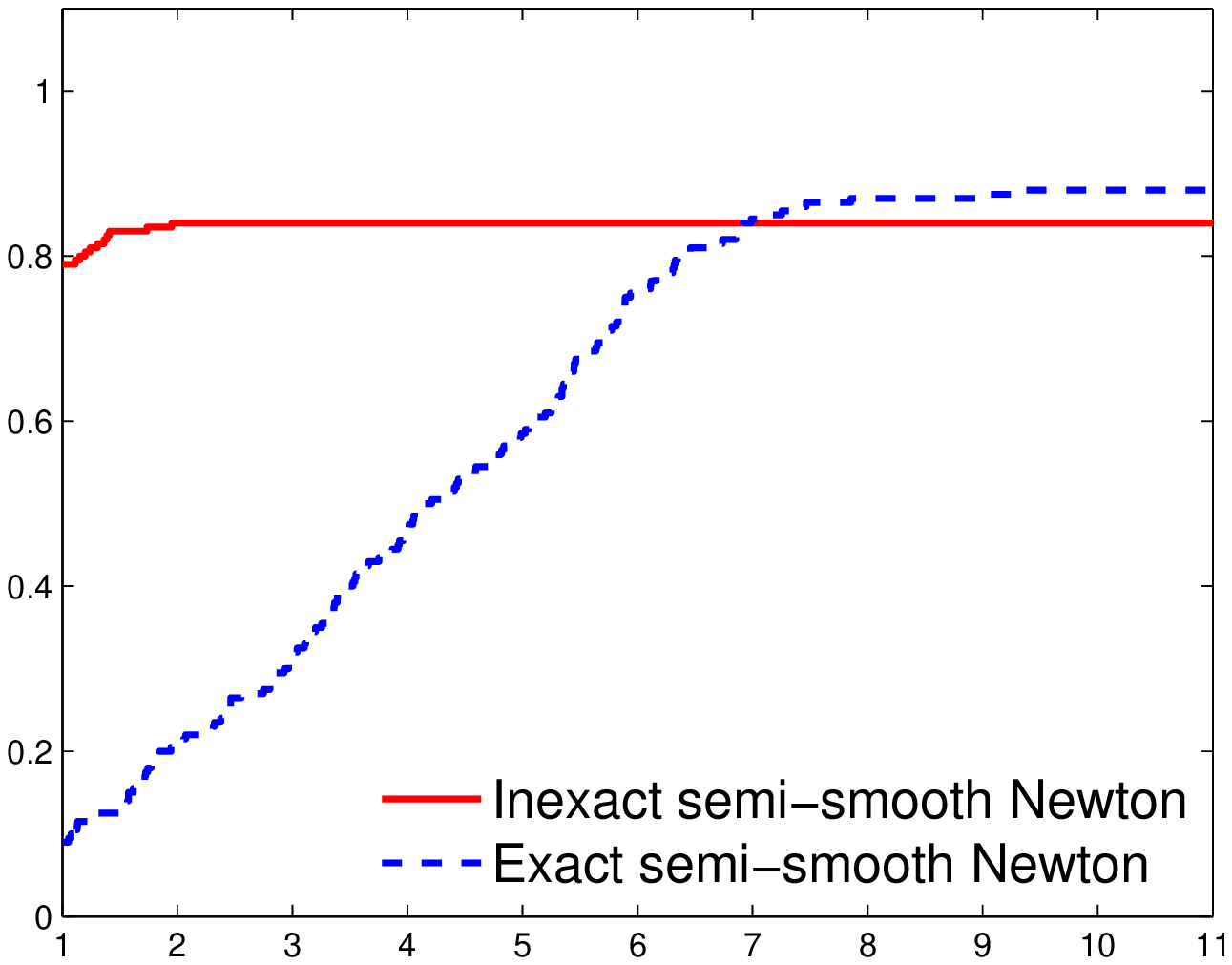}\\
		(a)
\end{figure}

\end{minipage} \hfill
\begin{minipage}[b]{0.50\linewidth}

\begin{figure}[H]
	\centering
		\includegraphics[scale=0.55]{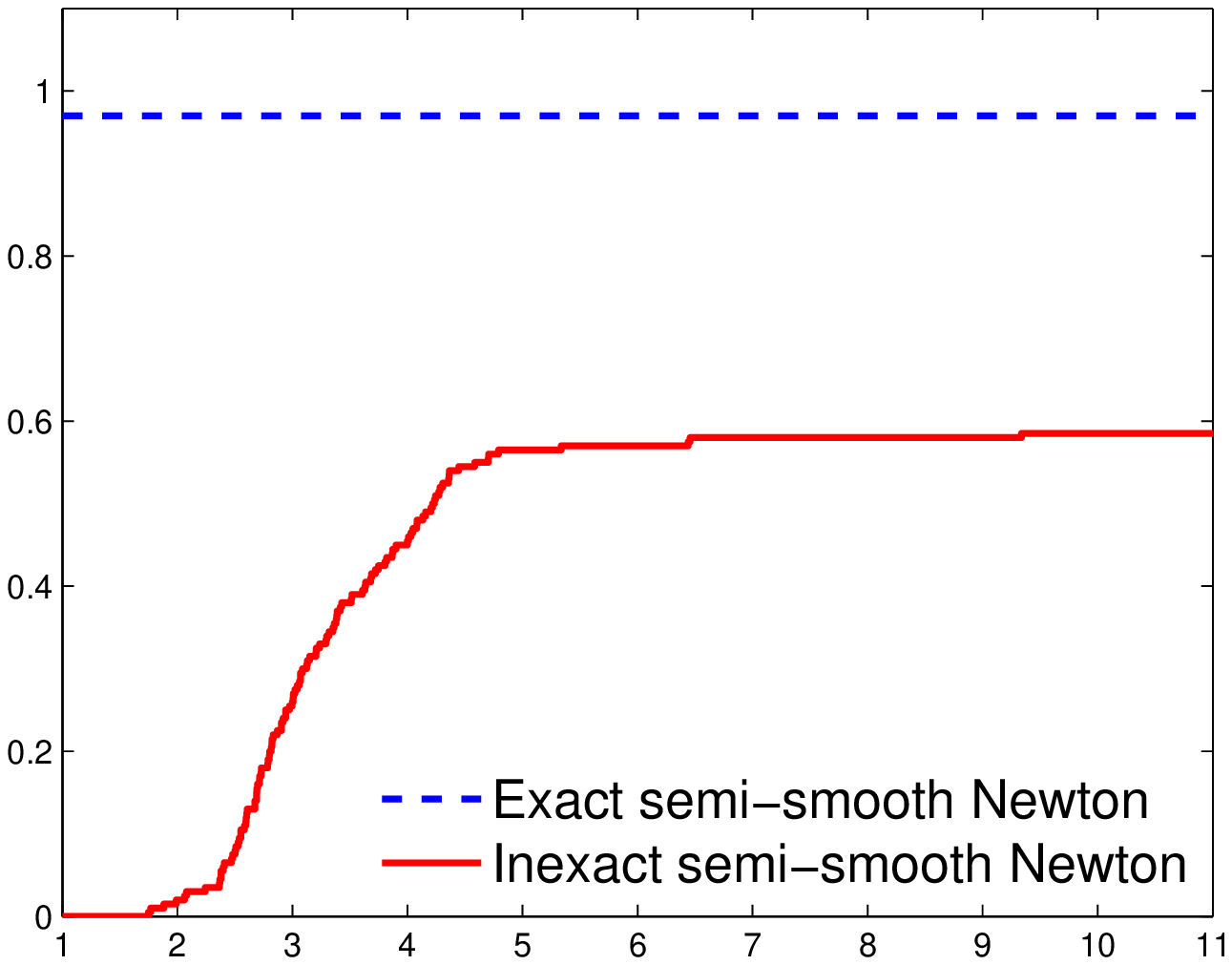}\\
		(b)
\end{figure}

\end{minipage}\hfill

\caption{Performance profile comparing Inexact {\it versus} Exact semi-smooth Newton method for large-scale sparse AVEs with ``ill-conditioned'' matrices: (a) ${\rm cond}(A) \approx 10^2$ and 
(b) ${\rm cond}(A) \approx 10^5$.}
\label{figill}
\end{figure}

In the first set of the second phase, analyzing Figure~\ref{figill}(a), we see that the inexact Newton method is more efficient than the exact one (efficiencies are 79\% and 9\%, respectively), 
while robustness rates are 84\% and 88\%, respectively.
The robustness difference corresponds to 8 problems that were solved by exact method and failed to be solved by the inexact one. Reciprocally, there is no problem. In others 24 problems, both methods 
failed. However, in all cases of failure, regardless of the method, the achieved accuracy was close to the desired one (typically, order of $10^{-8}$). 
This allows us to conclude that both methods were equivalently robust.
In terms of processing time, comparing with the set of problems of the 
first phase, the exact Newton method showed a considerable improvement. Note by  Figure~\ref{figill}(a) that the performance functions intersect each other approximately at 7 in the domain, while in 
Figure~\ref{figwellcond} the intersection takes place approximately at 160. Consequently, the difference between the two methods is much smaller in the case that the condition 
number of $A$ is order of $10^2$ than in the case where it is order of $10^1$.

Consider now the second set of problems where the condition numbers of $A$ are of order of $10^5$. 
Both methods were not able to solve with accuracy $10^{-8}$ an AVE that belongs to this set of ill-conditioned problems.
So, in this specific case, we used the accuracy equal to $10^{-5}$.
For the inexact method, the average value of $\theta$ is $2.85 \times 10^{-6}$.
Figure~\ref{figill}(b) shows the performance of the methods.
As we can note, the exact Newton method achieved much higher performance compared with the inexact one. The exact method was the faster one in {\it all} problems that were solved by both methods. 
The robustness is $59\%$ and $97\%$ for the inexact and exact method, respectively. 
The high efficiency of the exact Newton method is connected with the ability of this method to solve a problem with a moderate accuracy ($10^{-5}$) in a few iterations. The exact method did not need more than 
three iterations in order to solve a problem of this set: 166 problems were solved with 2 iterations, while 28 with 3 iterations.
On the other hand, since $\theta$ is oder of $10^{-6}$, the iterative linear equations solver is much required in an early iteration of the inexact method, generating additional runtime.
The poor robustness of the inexact method can be related with the lack of robustness of the {\it lsqr} routine to deal with ill-conditioned matrices.
It is appropriate to mention that no preconditioner was used in {\it lsqr}. 

Table \ref{tabsummary1} summarizes the results of the numerical experiments of the current section. The column ``${\rm cond}(A)$'' informs the (average) order of the condition number of the matrices $A$, while 
column ``$tol$'' reports the considered accuracy to solve an AVE. 

\begin{table}[H]
{\footnotesize
\begin{center}
\begin{tabular}{cc|cc|cc|} \cline{3-6}
 &  & \multicolumn{2}{|c|}{Inexact Newton method} & \multicolumn{2}{|c|}{Exact Newton method}\\ \hline

\multicolumn{1}{ |c  }{${\rm cond}(A)$} & $tol$ & Efficiency ($\%$) & Robustness ($\%$) & Efficiency ($\%$) & Robustness ($\%$)\\  \hline
\multicolumn{1}{ |c  }{$10^1$} & $10^{-8}$ & 97.5 & 98.0  &  5.0  & 98.0  \\
\multicolumn{1}{ |c  }{$10^2$} & $10^{-8}$ & 79.0   & 84.0  & 9.0  & 88.0  \\
\multicolumn{1}{ |c  }{$10^5$} & $10^{-5}$ & 0.0    & 59.0  & 97.0 & 97.0  \\ \hline
    \end{tabular}
\caption{Efficiency and robustness rates of the Inexact and Exact Newton methods for three sets of large-scale sparse AVEs varying the condition number of the matrices.}
\label{tabsummary1}
\end{center}}
\end{table}

The numerical experiments of this section allow us to conclude that the inexact Newton method is much more efficient than the exact one when $A$ is a large-scale sparse matrix with a ``moderate'' 
condition number. For matrices with higher condition numbers (order of $10^5$), both methods failed to solve the AVE problem with accuracy $10^{-8}$. Considering a greater tolerance ($10^{-5}$), the exact 
method was the most efficient and robust. This indicates that, at least in our implementation, the inexact Newton method is more sensitive to the increase of the condition number than the exact one.

\subsection{Influence of Density} \label{denseAVE}

The iterative linear equations solvers are suitable for large and sparse matrices, particularly for the routine {\it lsqr}. Therefore, in this section, we investigated the influence of the density of 
matrix $A$ on the performance of the exact and inexact semi-smooth Newton methods. In this third phase, we decided to deal only with relatively ``well-conditioned'' matrices because, according to the 
experiments of the previous section, both methods presented greater robustness on this class of problems. Theoretically, it is expected that the huge efficiency difference presented in 
Figure~\ref{figwellcond} be reduced.

We generated four sets of $200$ AVEs varying the density of matrix $A$. The density of the matrices in each set is approximately equal to: (a) $10\%$, (b) $40\%$, (c) $80\%$ and (d) $100\%$.
The Matlab routine {\it sprand} is efficient to generate sparse matrices. When the input density is high, {\it sprand} requires much processing time. In order to alleviate this cost, we decided to 
decrease the dimension of the matrices. In each set of AVEs we adopted $n$ equal to 3000 for the first two sets and 1500 for the other two. The average condition number of the matrices in each set is 
approximately equal to 11, 35, 17 and 18, respectively. Figure~\ref{figdense} shows the performance profiles, while Table~\ref{tabsummary2} shows the efficiency and robustness rates of the 
Inexact and Exact Newton methods in this four sets of test problems.

\begin{figure}

\begin{minipage}[b]{0.50\linewidth}

\begin{figure}[H]
	\centering
		\includegraphics[scale=0.55]{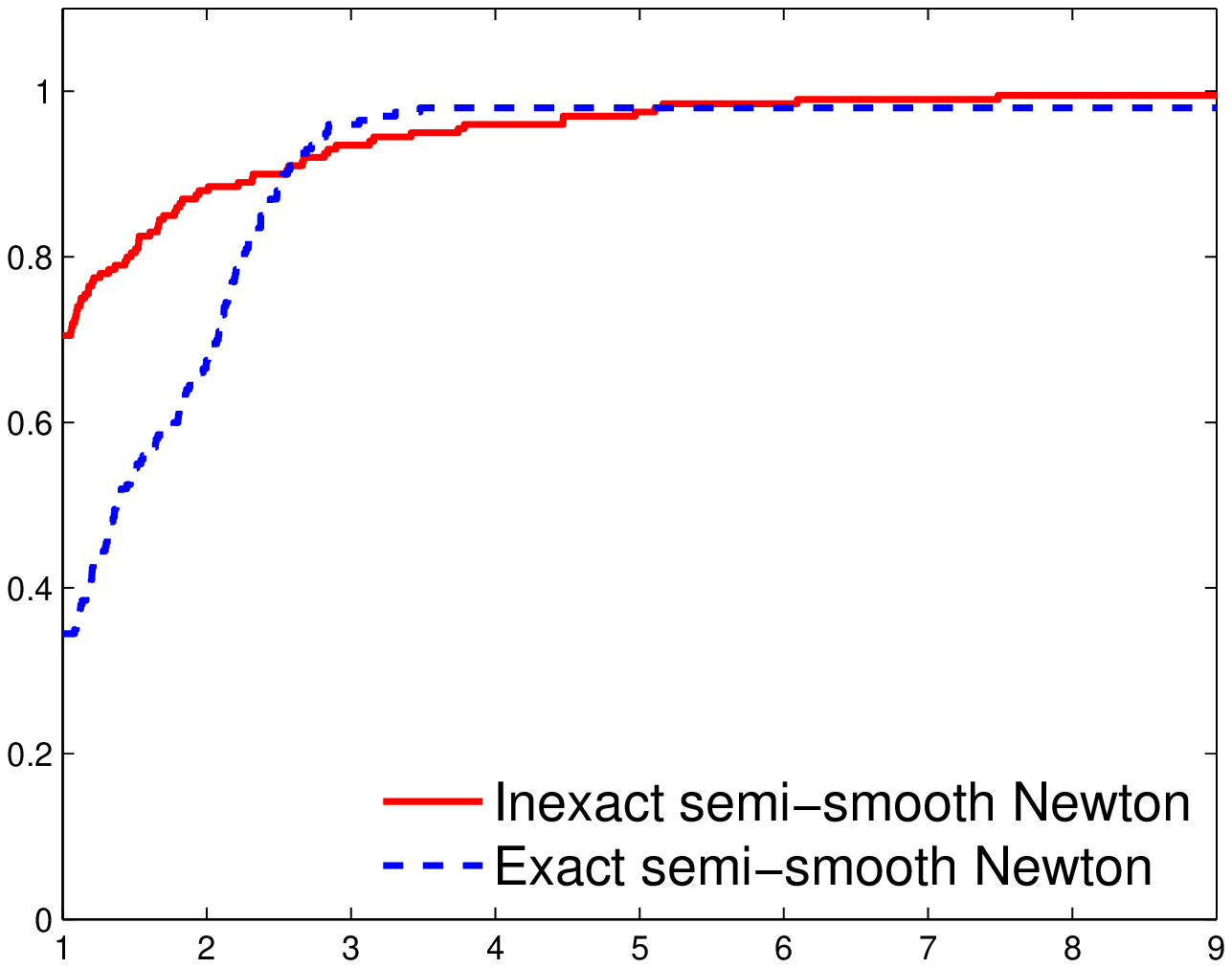}\\
		(a)
\end{figure}

\end{minipage} \hfill
\begin{minipage}[b]{0.50\linewidth}

\begin{figure}[H]
	\centering
		\includegraphics[scale=0.55]{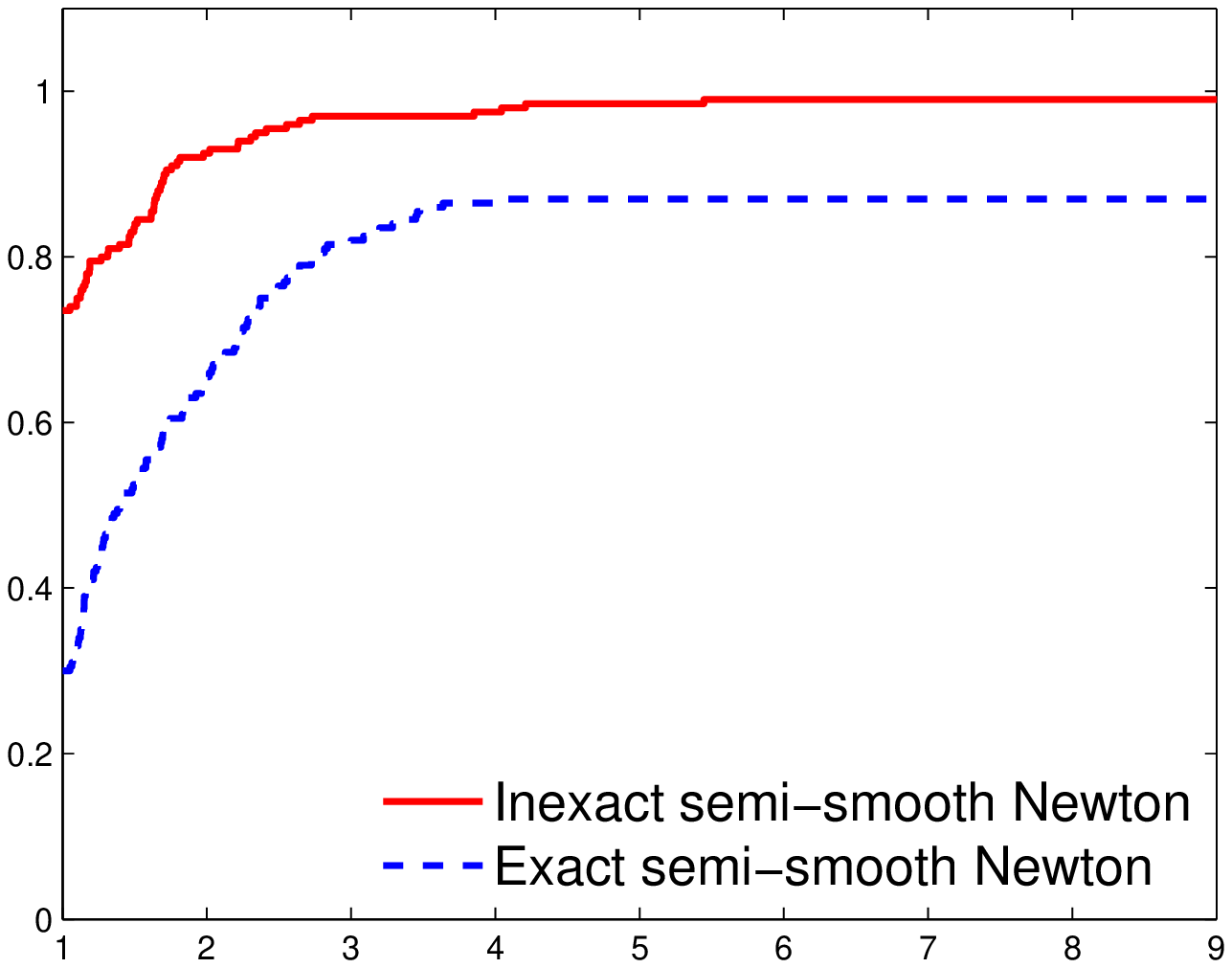}\\
		(b)
\end{figure}

\end{minipage}\hfill

\begin{minipage}[b]{0.50\linewidth}

\begin{figure}[H]
	\centering
		\includegraphics[scale=0.55]{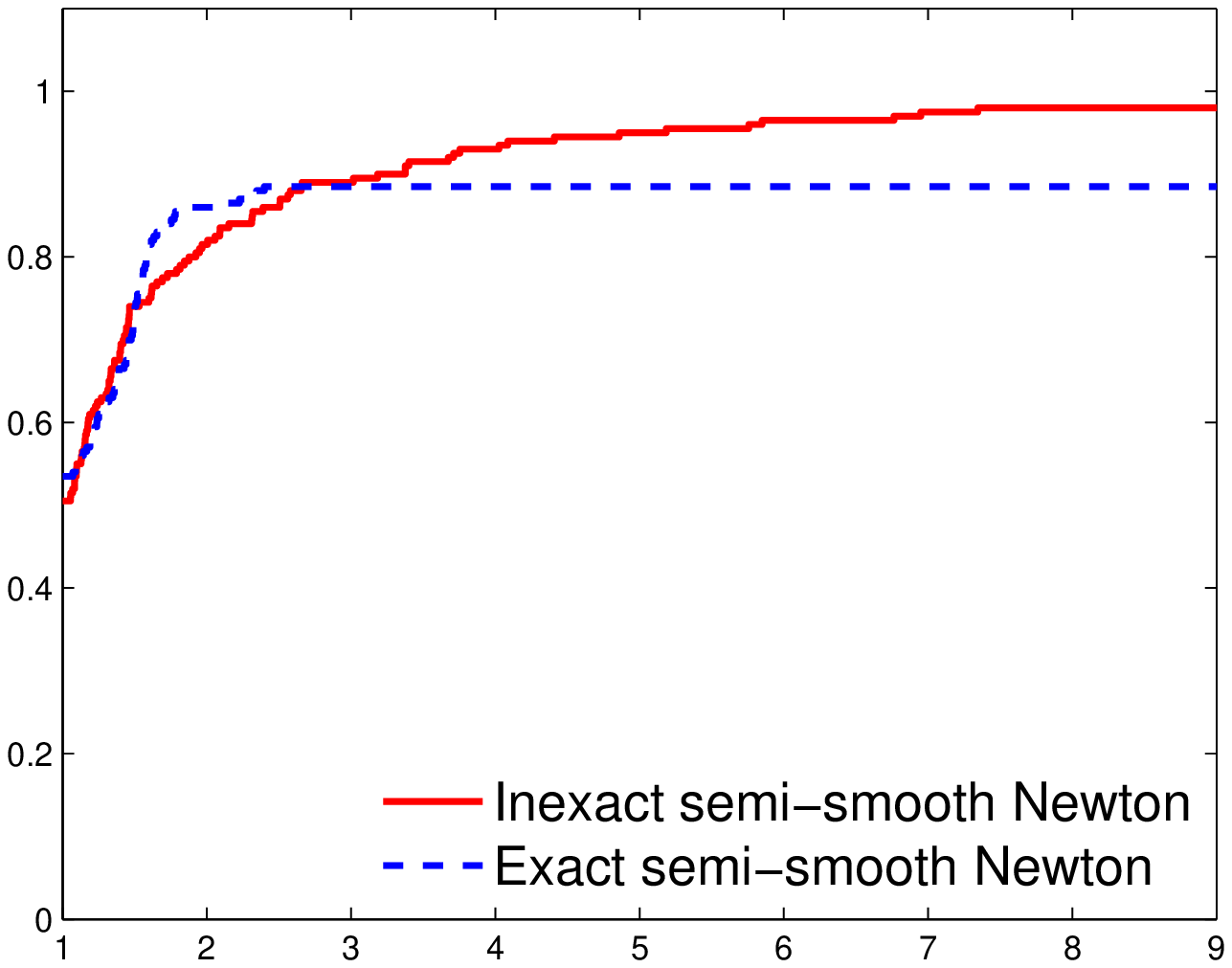}\\
		(c)
\end{figure}

\end{minipage} \hfill
\begin{minipage}[b]{0.50\linewidth}

\begin{figure}[H]
	\centering
		\includegraphics[scale=0.55]{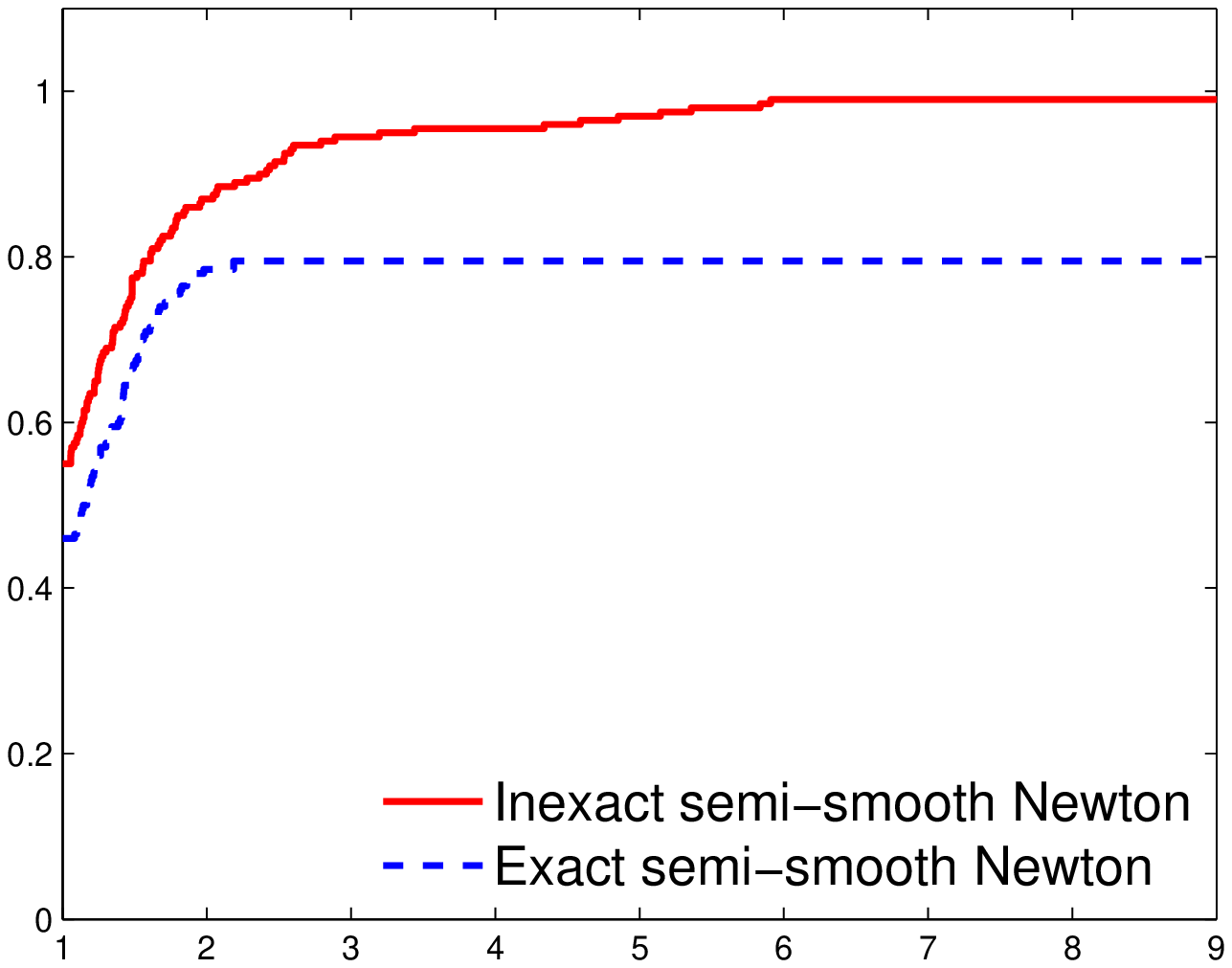}\\
		(d)
\end{figure}

\end{minipage}\hfill

\caption{Performance profile comparing Inexact {\it versus} Exact semi-smooth Newton method for dense AVEs with well-conditioned matrices. Density of $A$ is approximately equal to: (a)~$10\%$, (b)~$40\%$, 
(c)~$80\%$ and (d)~$100\%$.}
\label{figdense}
\end{figure}

\begin{table}[H]
{\footnotesize
\begin{center}
\begin{tabular}{cc|cc|cc|} \cline{3-6}
 & & \multicolumn{2}{|c|}{Inexact Newton method} & \multicolumn{2}{|c|}{Exact Newton method}\\ \hline

\multicolumn{1}{ |c  }{$n$}   & density ($\%$)  & Efficiency ($\%$) & Robustness ($\%$) & Efficiency ($\%$) & Robustness ($\%$)\\  \hline
\multicolumn{1}{ |c  }{3000} & 10  & 70.5 & 99.5  & 34.5  & 98.0    \\
\multicolumn{1}{ |c  }{3000} & 40  & 73.5 & 99.0    & 30.0    & 87.0    \\
\multicolumn{1}{ |c  }{1500} & 80  & 50.5 & 99.5  & 53.5  & 88.5  \\ 
\multicolumn{1}{ |c  }{1500} & 100 & 55.0   & 99.0    & 46.0    & 79.5  \\ \hline
    \end{tabular}
\caption{Efficiency and robustness rates of the Inexact and Exact Newton methods for four sets of AVEs varying the density of the matrices.}
\label{tabsummary2}
\end{center}}
\end{table}

Analyzing Figure~\ref{figdense} and Table~\ref{tabsummary2}, we see that, as expected, the difference between the methods was decreased compared to the case where problems are sparse, 
see Figure~\ref{figwellcond}. For a moderate density 
($10\%$ and $40\%$), the inexact Newton method is more efficient than the exact one (efficiency rate is 70.5$\%$ and 73.5$\%$ against 34.5$\%$ and 30.0$\%$, respectively). Considering full matrices (density 
$80\%$ and $100\%$) both methods showed equivalent efficiency rates: 50.5$\%$ and 55.0$\%$ for the inexact method against 53.5$\%$ and 46.0$\%$ for the exact one, respectively. Independently of density, 
the robustness rate of the inexact Newton method is greater than or equal to 99.0$\%$. On the other hand, as we can see in Table~\ref{tabsummary2}, the robustness rate of the exact method 
decreases according to the increase of the density. This phenomenon is clearly connected with the fact that the greater the density, the more operations are required by the direct method to
to solve a linear equation. Consequently, the exact Newton method is most affected by the accumulation of floating-point errors resulting in lower robustness. Obviously, the number of operations to solve 
a linear equation also depends on the dimension $n$. So, considering the exact Newton method, the robustness rate of 87.0$\%$, in the set of problems where $n= 3000$ and the density is approximately $40\%$, 
is consistent with robustness rate of 88.5$\%$ where the dimension and density are $1500$ and approximately $80\%$, respectively.

The numerical experiments of this section allow us to conclude that, at least in problems where $A$ is a ``well-conditioned'' matrix, 
the inexact Newton method proved to be competitive (in terms of efficiency) with the exact method even if $A$ is a full matrix.
Considering robustness, while the inexact Newton method did not lose performance, the exact method was found to be sensitive with density increase.
\section{Final Remarks} \label{sec:conclusions}
In this work we dealt with the global $Q$-linear convergence of the inexact semi-smooth Newton method for solving AVE in \eqref{eq:vae}. In particular,  a bound for the relative error tolerance to  solve subproblem arises very clearly  in the presented results. The inexact analysis  support the efficient computational  implementations of the exact schemes.  Our implementation  shows the advantage over the exact method in many considered cases, for instance, sparse and large scale problems. We hope that this study serves as a basis for future research on other more efficient variants for solving AVE. We intend to study the semi-smooth Newton method from the point of view of actual implementations and provide comparisons with alternative approaches.  Additional numerical tests indicate that our sufficient condition for convergence can be relaxed, which also deserver to be investigate.

\end{document}